\documentclass[reqno,final,11pt]{amsart}

\usepackage[utf8]{inputenc}
\usepackage[T1]{fontenc}
\usepackage{amsmath,amssymb,amsfonts,amsthm}
\usepackage{geometry}
\usepackage{enumitem}
\usepackage{xcolor}
\usepackage{hyperref}
\usepackage{natbib}
\usepackage{url}
\usepackage{lineno}
\usepackage{graphicx}

\definecolor{linkcolor}{rgb}{0.16,0.42,0.70}
\hypersetup{
  colorlinks=true,
  linkcolor=linkcolor,
  citecolor=linkcolor,
  urlcolor=linkcolor
}

\newcommand{\Z}{\mathbb Z}

\newcommand{\Nzero}{\mathbb N_0}
\newcommand{\E}{\mathbb E}

\newcommand{\Pbb}{\mathbb P}

\newcommand{\dd}{\,d}

\theoremstyle{plain}
\newtheorem{theorem}{Theorem}[section]
\newtheorem{proposition}[theorem]{Proposition}
\newtheorem{lemma}[theorem]{Lemma}
\newtheorem{corollary}[theorem]{Corollary}

\theoremstyle{definition}
\newtheorem{definition}[theorem]{Definition}

\theoremstyle{remark}
\newtheorem{remark}[theorem]{Remark}

\numberwithin{equation}{section}

\title[The frog model with random Weibull lifetimes]
{The Frog Model on \(\mathbb Z\) with Random Discrete Weibull Lifetimes and Biased Nearest-Neighbour Random Walks}

\author[J.H. Ram\'irez-Gonz\'alez]{J.H. Ram\'irez-Gonz\'alez}
\address{Universidade de S\~ao Paulo, Brazil}
\email{hermenegildo.ramirez@usp.br}

\author[F.P. Machado]{Fabio Prates Machado}
\address{Universidade de S\~ao Paulo, Brazil}
\email{fmachado@ime.usp.br}

\renewcommand{\subjclassname}{2020 Mathematics Subject Classification}

\subjclass[2020]{Primary 60K35; Secondary 60G50, 60K37, 60J80.}

\keywords{biased nearest-neighbour random walk, discrete Weibull distribution,
frog model, phase transition, random lifetimes, random survival parameter}

\begin{document}
%\linenumbers
\begin{abstract}
We study the frog model on \(\mathbb Z\) with particle-wise random discrete
Weibull lifetimes and biased nearest-neighbour random walks. Each particle has
an independent survival parameter \(\pi\in(0,1)\). Conditionally on \(\pi=p\),
its lifetime \(\Xi\) satisfies
\[
  \Pbb(\Xi\ge k\mid \pi=p)=p^{k^\gamma},
  \qquad k\in\Nzero,
\]
where \(\gamma>0\). The distribution of \(\pi\) is assumed to have right-edge
density
\[
  f_\pi(u)\sim (1-u)^{\beta-1}
  L\left(\frac1{1-u}\right),
  \qquad u\uparrow1,
\]
where \(\beta>0\) and \(L:(0,\infty)\to(0,\infty)\) is a slowly varying function at infinity. The main step is to
estimate the tail of the maximal displacement of a single particle before
 death. If \(\tau_n\) denotes the time needed by the underlying walk to reach
 distance \(n\), then
\[
  \Pbb(D^*\ge n)=\E[G(\tau_n)],
  \qquad
  G(k):=\Pbb(\Xi\ge k).
\]
Since
\[
  G(k)\sim \Gamma(\beta)k^{-\gamma\beta}L(k^\gamma),
\]
and the biased nearest-neighbour random walk has linear hitting-time scale, the off-critical
threshold is \(\beta_c=1/\gamma\). If \(\beta>\beta_c\) and the initial number
of particles per site has finite mean, the model dies out almost surely. If \(\beta<\beta_c\) and the initial configuration is not almost surely empty,
the model survives with positive probability in the direction of the drift.
\end{abstract}

\maketitle

\section{Introduction and preliminaries}

The frog model is an interacting particle system in which particles are placed
on the vertices of a graph. At time zero, some particles are active and the
remaining particles are inactive. Active particles move according to a prescribed
random walk. When an active particle visits a site, all particles located at
that site are activated and start evolving independently according to the same
rules. The main question is whether active particles persist indefinitely or
whether the system eventually dies out.

A foundational study of the frog model is due to \cite{AlvesMachadoPopov2002}. Since then, survival and phase transitions have been studied on several graphs, including lattices and trees; see, for instance, \cite{FontesMachadoSarkar2004,LebensztaynMachadoPopov2005}. On \(\mathbb Z\), the frog model with fixed geometric lifetimes and simple symmetric random walks dies out almost surely. Several related variants have been studied, including models with drift \cite{GantertSchmidt2009}, spatial inhomogeneity \cite{BertacchiMachadoZucca2014}, and random survival parameters \cite{CarvalhoMachado}.

In \cite{CarvalhoMachado}, the authors considered geometric lifetimes with an
i.i.d. random survival parameter \(\pi\in(0,1)\), and obtained a threshold governed
by the mass of \(\pi\) near one. A broader right-edge class of survival-parameter
laws was treated in \cite{CMRGeneral}. The discrete Weibull distribution was introduced by \cite{NakagawaOsaki1975}, and the frog model with discrete Weibull lifetimes, random survival parameter, and simple symmetric random walks was studied in \cite{CMRWeibull}.

Here we consider the same random-survival-parameter mechanism, but with a different underlying motion. Instead of the simple symmetric random walk treated in \cite{CMRWeibull}, each active particle performs a biased nearest-neighbour random walk. Thus the motion is no longer diffusive: the hitting time of a distant site has linear order rather than quadratic order. For the simple symmetric random walk considered in \cite{CMRWeibull}, the corresponding off-critical threshold is \[ \beta_c=\frac{1}{2\gamma}. \] This change of scale is the reason why the critical exponent differs from the symmetric case.

Conditionally on \(\pi=p\), the lifetime has discrete Weibull tail
\[
  \Pbb(\Xi\ge k\mid \pi=p)=p^{k^\gamma},
  \qquad k\in\Nzero,
\]
where \(\gamma>0\). The case \(\gamma=1\) corresponds to geometric lifetimes,
whereas \(\gamma\ne1\) allows the lifetime tail to be either heavier or lighter
than the geometric one. As in \cite{CMRGeneral,CMRWeibull}, the distribution of
the survival parameter is described through its behaviour near the right
endpoint. We assume that its density satisfies
\[
  f_\pi(u)\sim (1-u)^{\beta-1}
  L\left(\frac1{1-u}\right),
  \qquad u\uparrow1,
\]
where \(\beta>0\) and \(L:(0,\infty)\to(0,\infty)\) is a slowly varying function at infinity. The parameter \(\beta\) determines the right-edge order of the density \(f_\pi\).

The proof is based on a one-particle estimate. Let \(S=(S_m)_{m\ge0}\) denote
an underlying walk, let \(D^*\) be the maximal two-sided displacement of a
single particle during its lifetime, and let
\[
  \tau_n:=\inf\{m\ge1: |S_m|=n\}
\]
be the first time at which this walk reaches distance \(n\) from its starting
point. Then
\[
  \Pbb(D^*\ge n)=\E[G(\tau_n)],
  \qquad
  G(k):=\Pbb(\Xi\ge k)=\E[\pi^{k^\gamma}].
\]
The right-edge assumption gives
\[
  G(k)\sim
  \Gamma(\beta)k^{-\gamma\beta}L(k^\gamma),
  \qquad k\to\infty.
\]
Thus the displacement tail is determined by evaluating a regularly varying
lifetime tail at the hitting time of the underlying walk.

For a nearest-neighbour random walk with
\[
  \Pbb(X_1=1)=r,
  \qquad
  \Pbb(X_1=-1)=1-r,
  \qquad r\ne\frac12,
\]
set \(v:=|2r-1|\). Since the walk has nonzero drift, its first exit time from \((-n,n)\) satisfies
\[
  \frac{\tau_n}{n}\longrightarrow \frac1v
  \qquad\text{almost surely}.
\]
Consequently,
\[
  \Pbb(D^*\ge n)
  \sim
  \Gamma(\beta)v^{\gamma\beta}
  n^{-\gamma\beta}L(n^\gamma).
\]
The comparison criterion for the frog model on \(\mathbb Z\) uses \(D^*\) for
extinction and the one-sided displacement in the direction of the drift for
survival. By Proposition~\ref{prop:Dstar-drift}, both give the same
off-critical threshold,
\[
  \beta_c=\frac1\gamma.
\]

 The main conclusion is the following. If \(\beta>1/\gamma\) and
 \(\E(\eta)<\infty\), then the model dies out almost surely. If
 \(\beta<1/\gamma\) and \(\Pbb(\eta=0)<1\), then the model survives with positive
 probability in the direction of the drift. The critical regime
 \(\beta=1/\gamma\) is treated separately in Corollary~\ref{cor:critical-drift-weibull},
 where sufficient extinction and survival conditions are stated in terms of the
 slowly varying factor \(L\) and the constants appearing in the comparison
 criterion.

The paper is organized as follows. Section~\ref{sec:model} defines the model
 and recalls the comparison criterion for the frog model on \(\mathbb Z\).
 Section~\ref{sec:lifetime} proves the lifetime-tail asymptotic generated by
 the random survival parameter. Section~\ref{sec:drift} treats biased
 nearest-neighbour random walks, proves the off-critical threshold
 \(\beta_c=1/\gamma\), and derives the corresponding critical-regime corollary.

\subsection{The model and the comparison criterion}\label{sec:model}

Let \(\eta=(\eta_x)_{x\in\Z}\) be an i.i.d. family of \(\Nzero\)-valued random
variables. The variable \(\eta_x\) represents the number of particles initially
placed at site \(x\). Particles are indexed by pairs \((x,i)\), with
\(1\le i\le \eta_x\).

At time zero, all particles at the origin are active, while all particles at
sites \(x\ne0\) are inactive. 
Fix \(r\in(0,1)\setminus\{1/2\}\). Once activated, each particle follows its
assigned nearest-neighbour random walk with increments satisfying
\[
  \Pbb(X_1=1)=r,
  \qquad
  \Pbb(X_1=-1)=1-r.
\]
The drift is
\[
  \alpha:=\E[X_1]=2r-1,
\]
and we write
\[
  v:=|\alpha|=|2r-1|>0.
\]

Each particle \((x,i)\) also receives an independent survival parameter
\(\pi_{x,i}\in(0,1)\). The variables \((\pi_{x,i})\) are i.i.d. with
common density \(f_\pi\). Conditionally on \(\pi_{x,i}=p\), the lifetime
\(\Xi_{x,i}\in\Nzero\) satisfies
\begin{equation}\label{eq:weibull-life}
  \Pbb(\Xi_{x,i}\ge k\mid \pi_{x,i}=p)
  =
  p^{k^\gamma},
  \qquad k\in\Nzero,
\end{equation}
where \(\gamma>0\). The families \((\eta_x)_{x\in\Z}\),
\((\pi_{x,i})\), and the particle motions are mutually independent.
Conditional on the survival parameters, the lifetimes \((\Xi_{x,i})\) are
independent of one another and independent of the initial occupation numbers
and of the particle motions.

We assume that the survival-parameter density satisfies
\begin{equation}\label{eq:right-edge}
  f_\pi(u)
  \sim
  (1-u)^{\beta-1}
  L\left(\frac1{1-u}\right),
  \qquad u\uparrow1,
\end{equation}
where \(\beta>0\) and \(L:(0,\infty)\to(0,\infty)\) is a slowly varying function at infinity.

To make the particle motions explicit, let
\[
  \bigl(S_m^{x,i}\bigr)_{m\ge0},
  \qquad x\in\Z,\; i\ge1,
\]
be independent copies of the nearest-neighbour random walk specified above,
started from
\[
  S_0^{x,i}=x.
\]
Thus, once particle \((x,i)\) is activated, its position after \(m\) steps is
given by \(S_m^{x,i}\). The family of walks is independent of the initial
occupation numbers, the survival parameters, and the lifetimes.
For a particle initially located at \(x\), define its one-sided and two-sided
maximal displacements during its lifetime by
\[
  D_{x,i}^{\to}
  :=
  \max_{0\le k\le \Xi_{x,i}}
  \bigl(S_k^{x,i}-x\bigr),
  \qquad
  D_{x,i}^{\leftarrow}
  :=
  \max_{0\le k\le \Xi_{x,i}}
  \bigl(x-S_k^{x,i}\bigr),
\]
and
\[
  D_{x,i}^*
  :=
  \max_{0\le k\le \Xi_{x,i}}
  \bigl|S_k^{x,i}-x\bigr|
  =
  D_{x,i}^{\to}\vee D_{x,i}^{\leftarrow}.
\]
By spatial homogeneity, the laws of these random variables do not depend on
\(x\) or \(i\). We write \(D^\to\), \(D^\leftarrow\), and \(D^*\) for
generic copies associated with a particle started from the origin.
\begin{definition}
The frog model survives if, at every time, there is at least one active particle
alive. Otherwise, the model dies out.
\end{definition}

We shall use the following one-dimensional comparison criterion. It is the
standard percolation comparison for the frog model on \(\mathbb Z\), stated
in the form used in \cite[Proposition~1.2]{CarvalhoMachado}. The criterion is
formulated in terms of the one-particle maximal displacements \(D^*\),
\(D^\to\), and \(D^\leftarrow\), and is therefore applicable to the present
lifetime mechanism.

 \begin{proposition}\label{prop:frog-criterion}
 Let \(D^*\) be the maximal two-sided displacement of a single active particle
 and let \(D^\to\) and \(D^\leftarrow\) be the corresponding one-sided maximal
 displacements to the right and to the left.
 \begin{enumerate}[label=\textup{(\roman*)}]
 \item If \(\E(\eta)<\infty\) and
 \[
   \limsup_{n\to\infty} n\Pbb(D^*\ge n)
   <
   \frac{1}{2\E(\eta)},
 \]
 then the frog model dies out almost surely.
 \item If \(\Pbb(\eta=0)<1\) and, for one of the two directions,
 \[
   \liminf_{n\to\infty} n\Pbb(D^\to\ge n)
   >
   \frac{1}{\E(\eta)}
\] or \[
   \liminf_{n\to\infty} n\Pbb(D^\leftarrow\ge n)
   >
   \frac{1}{\E(\eta)},
 \]
 where \(1/\E(\eta):=0\) when \(\E(\eta)=\infty\), then the frog model survives
 with positive probability.
 \end{enumerate}
 \end{proposition}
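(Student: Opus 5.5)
The proof is a comparison with a one-dimensional dependent percolation/branching structure, as in \cite[Proposition~1.2]{CarvalhoMachado}; the lifetime mechanism enters only through the law of the displacements. For the proof, let each site \(x\) carry the random radius \(R_x:=\max_{1\le i\le\eta_x}D^*_{x,i}\), with \(R_x=0\) if \(\eta_x=0\). The family \((R_x)_{x\in\Z}\) is i.i.d., because the \(\eta_x\), the survival parameters, the lifetimes and the walks are mutually independent across particles. Every site ever visited by an active particle lies in the cluster of the origin for the ``interval'' percolation in which \(x\) covers \([x-R_x,x+R_x]\). Here the origin's own particles are included, and coverage is propagated each time a newly covered site is activated. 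The activated set is therefore contained in this cluster. Moreover, the frog model survives only if infinitely many distinct sites are activated, since each particle has an a.s. finite lifetime \(\Xi\).

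\textbf{Extinction (i).} I would show that the cluster is a.s. finite. It suffices to find infinitely many \(n>0\) such that no site \(x\le n\) covers \(n+1\), and likewise on the left. The event that site \(n+1\) is not covered from the left by any \(x\in\{\dots,n-1,n\}\) has probability
\[
\prod_{j\ge0}\Pbb(R\le j)\ \ge\ \prod_{j\ge0}\bigl(1-\E(\eta)\Pbb(D^*> j)\bigr),
\]
using the union bound \(\Pbb(R>j)\le\E(\eta)\Pbb(D^*>j)\). The hypothesis \(\limsup n\Pbb(D^*\ge n)<1/(2\E\eta)\) yields \(\E(\eta)\Pbb(D^*>j)\le c/j\) with \(c<1/2\) for large \(j\). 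This alone does not make the product positive, because \(\sum c/j\) diverges. The standard remedy is a renewal argument. One considers the rightmost covered point \(M_n:=\max_{x\le n}(x+R_x)\) restricted to the activated region and shows that the process of successive ``overshoots'' is dominated by a process that returns infinitely often. I expect this to be the \textbf{main obstacle}, and the constant \(1/2\) arises exactly here. The probability that a fresh block of length \(n\) produces a jump beyond \(2n\) is about \(n\cdot c/n\cdot\) (a geometric factor), and \(c<1/2\) makes the expected number of sites reached beyond a gap, counted over both directions, less than one. Combining the two directions, the cluster is dominated by a subcritical Galton--Watson-type exploration, which therefore terminates a.s.

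\textbf{Survival (ii).} Assume the right-hand condition holds. Only particles moving to the right are then used, and \(D^\to\) replaces \(D^*\). Since \(\Pbb(\eta=0)<1\), with positive probability the origin has a particle. Define the rightmost activated point \(M_k\) explored sequentially. The probability that a block of already activated sites \(\{0,\dots,m\}\) fails to push beyond \(m\) is at most
\[
\prod_{j=0}^{m}\Pbb(\text{no particle at }m-j\text{ reaches }m+1)
=\prod_{j}\E\bigl[\Pbb(D^\to\le j)^{\eta}\bigr].
\]
If \(\liminf j\Pbb(D^\to\ge j)>1/\E(\eta)\), the factors are about \(1-\E(\eta)c'/j\) with \(\E(\eta)c'>1\). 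The failure probability at stage \(m\) is therefore of order \(m^{-\E(\eta)c'}\), which is summable. When \(\E\eta=\infty\), \(\eta\) is truncated at a large level \(K\) so that \(\E(\eta\wedge K)c'>1\). A Borel--Cantelli argument along the growing activated interval, started after a finite initial stretch of positive probability, then shows that the activated set is unbounded to the right with positive probability. Survival follows because each newly activated site with \(\eta_x\ge1\) yields a fresh active particle at later times.
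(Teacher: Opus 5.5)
The paper does not prove this criterion; it imports it from \cite[Proposition~1.2]{CarvalhoMachado}, so your sketch is a reconstruction. Your part~(ii) is the standard one-sided argument and is essentially sound. The one step to make explicit is how the positive-probability initial stretch is combined with the Borel--Cantelli tail. For example, run Borel--Cantelli for the process built from sites \(1,2,\dots\) only, which is independent of the particles at \(0\), and intersect with \(\{R^\to_0\ge N\}\), where \(R^\to_x:=\max_{i\le\eta_x}D^\to_{x,i}\). Alternatively, use Harris' inequality, since both events are increasing in the radii.

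Part~(i), however, has a genuine gap. You correctly note that ``no site \(x\le n\) covers \(n+1\)'' has probability \(\prod_{j\ge0}\Pbb(R\le j)=0\). You then fall back on an overshoot process for the rightmost covered point, which is a one-sided object. On \(\Z\), sites on the left of the cluster can push the right frontier arbitrarily far, so the two directions cannot be handled separately and then ``combined''. The ``subcritical Galton--Watson-type exploration'' is never constructed, the heuristic does not say what is dominated by what, and the constant \(\tfrac12\) is asserted rather than derived. So the actual content of~(i) is missing.

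The missing idea is that \emph{symmetric closed intervals form a renewal structure}. Let \(E_n:=\{R_x\le n-|x|\text{ for all }|x|\le n\}\). On \(E_n\) every site of \([-n,n]\) covers only sites of \([-n,n]\), so the cluster of \(0\) is trapped there.

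Take \(m>n\) and any event \(A\subset E_n\) determined by \((R_x)_{|x|\le n}\). The constraints of \(E_m\) on \(|x|\le n\) are implied by \(E_n\). The remaining ones involve only the sites \(n<|x|\le m\), where each value \(m-|x|\in\{0,\dots,m-n-1\}\) is taken twice. Hence \(\Pbb(A\cap E_m)=\Pbb(A)\,u_{m-n}\) with \(u_k:=\prod_{j=0}^{k-1}\Pbb(R\le j)^2\).

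With \(T:=\inf\{n:E_n\}\), this gives \(\sum_m\Pbb(E_m)s^m=\E[s^T;T<\infty]\sum_k u_ks^k\), while \(\Pbb(E_m)=\Pbb(R\le m)\,u_m\sim u_m\). Under the hypothesis, \(\Pbb(R>j)\le\E(\eta)\Pbb(D^*\ge j+1)\le c/(j+1)\) for large \(j\), with \(c<\tfrac12\). Also \(\Pbb(R\le j)\ge\Pbb(R=0)=\E[(1-\E\pi)^\eta]>0\), because \(\Pbb(\Xi=0)=1-\E\pi\). Thus \(u_k\ge Ck^{-2c}\) and \(\sum_k u_k=\infty\), and letting \(s\uparrow1\) forces \(\Pbb(T<\infty)=1\). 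The square in \(u_k\), which reflects that both ends of the interval must be closed at once, is exactly where the factor \(1/(2\E(\eta))\) comes from.
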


\subsection{The lifetime tail}\label{sec:lifetime}

We record the lifetime-tail estimate that will be used in the one-particle
argument. The lifetime law and the right-edge assumption have already been
specified in \eqref{eq:weibull-life} and \eqref{eq:right-edge}. For a generic
particle, define
\[
  G(k):=\Pbb(\Xi\ge k)
  =
  \E[\pi^{k^\gamma}],
  \qquad k\in\Nzero.
\]
Equivalently,
\[
  G(k)=\int_0^1 u^{k^\gamma}f_\pi(u)\dd u,
  \qquad k\in\Nzero.
\]
We also use the convention
\[
  G(\infty):=0.
\]

\begin{lemma}\label{lem:lifetime-tail}
Assume \eqref{eq:right-edge}. Then
\[
  G(k)
  \sim
  \Gamma(\beta)k^{-\gamma\beta}L(k^\gamma),
  \qquad k\to\infty.
\]
In particular, \(G\) is nonincreasing and regularly varying at infinity with
index \(-\gamma\beta\).
\end{lemma}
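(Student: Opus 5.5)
Writing $t:=k^\gamma$, we need $\int_0^1 u^{t}f_\pi(u)\,du\sim\Gamma(\beta)t^{-\beta}L(t)$ as $t\to\infty$; this is an Abelian (Laplace-transform) estimate. Monotonicity of $G$ is immediate, since $u\mapsto u^{k^\gamma}$ is nonincreasing in $k$ for $u\in(0,1)$. Regular variation with index $-\gamma\beta$ then follows from the asymptotic, because $k\mapsto L(k^\gamma)$ is slowly varying: $L(\lambda^\gamma k^\gamma)/L(k^\gamma)\to1$.

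\emph{Step 1: localize near $u=1$.} Fix $\delta\in(0,1)$ small, so that $f_\pi(u)\le 2(1-u)^{\beta-1}L(1/(1-u))$ on $(1-\delta,1)$. The contribution of $(0,1-\delta]$ is at most $(1-\delta)^t$, since $f_\pi$ is a probability density. This decays exponentially in $t$, hence is $o(t^{-\beta}L(t))$, because $t^{\beta}L(t)$ grows at most polynomially by Potter's bounds.

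\emph{Step 2: change variables on $(1-\delta,1)$.} Put $1-u=s/t$, so $du=ds/t$ and $u^t=(1-s/t)^t$. The integral becomes
\[
 t^{-\beta}\int_0^{\delta t} (1-s/t)^t\, \frac{f_\pi(1-s/t)}{(s/t)^{\beta-1}L(t/s)}\,s^{\beta-1}L(t/s)\,\frac{ds}{1}.
\]
Dividing by $t^{-\beta}L(t)$, the integrand converges pointwise to $e^{-s}s^{\beta-1}$. This uses $(1-s/t)^t\to e^{-s}$, the ratio factor tending to $1$ by \eqref{eq:right-edge}, and $L(t/s)/L(t)\to1$ for fixed $s$ by slow variation. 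The limit integrates to $\Gamma(\beta)$.

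\emph{Step 3: dominated convergence (main obstacle).} We need a uniform majorant for $L(t/s)/L(t)$ over $s\in(0,\delta t)$, i.e.\ over $t/s\in(1/\delta,\infty)$. Potter's theorem gives, for any $\epsilon>0$ and $t,t/s\ge x_0$,
\[
L(t/s)/L(t)\le 2\max(s^{\epsilon},s^{-\epsilon}).
\]
Choosing $\delta\le 1/x_0$ guarantees $t/s\ge x_0$ throughout the range. We also use $(1-s/t)^t\le e^{-s}$ and bound the density ratio by $2$. The integrand is then dominated by
\[
4e^{-s}s^{\beta-1}\max(s^{\epsilon},s^{-\epsilon}),
\]
which is integrable once $\epsilon<\beta$; this is where $\beta>0$ enters. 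Dominated convergence yields the limit $\Gamma(\beta)$. Combining with Step 1 gives $G(k)\sim\Gamma(\beta)t^{-\beta}L(t)$ with $t=k^\gamma$, as claimed.

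Alternatively, one could cite Karamata's Tauberian/Abelian theorem for Laplace transforms, applied after the substitution $u=e^{-y}$. There one notes that $1-e^{-y}\sim y$ and that $L(1/(1-e^{-y}))\sim L(1/y)$ by the uniform convergence theorem. I would present the direct argument above for self-containment.
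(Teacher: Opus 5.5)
Your proof is correct and follows the paper's route: discard the region away from $u=1$ using an exponentially small bound, rescale via $1-u=s/t$, and apply dominated convergence with a Potter bound, choosing the cutoff $\delta\le 1/x_0$ so that $t/s\ge x_0$ on the whole range. One small slip in Step 1: the quantity that must grow at most polynomially is $t^{\beta}/L(t)$, not $t^{\beta}L(t)$; both do, since $1/L$ is also slowly varying.
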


\begin{proof}
We first prove the following estimate:
\begin{equation}\label{eq:abelian-step}
  \int_0^1 u^a f_\pi(u)\dd u
  \sim
  \Gamma(\beta)a^{-\beta}L(a),
  \qquad a\to\infty.
\end{equation}
Afterwards we shall take \(a=k^\gamma\).

Let \(A\in(0,1)\). We split
\[
  \int_0^1 u^a f_\pi(u)\dd u
  =
  \int_0^A u^a f_\pi(u)\dd u
  +
  \int_A^1 u^a f_\pi(u)\dd u.
\]
The first term is negligible. Indeed,
\[
  0\le
  \int_0^A u^a f_\pi(u)\dd u
  \le
  A^a\int_0^A f_\pi(u)\dd u
  \le A^a.
\]
Since \(A\in(0,1)\), \(A^a\) decays exponentially, whereas
\(a^{-\beta}L(a)\) is regularly varying. Hence
\[
  \int_0^A u^a f_\pi(u)\dd u
  =
  o\left(a^{-\beta}L(a)\right).
\]

It remains to analyze the contribution from \(u\) close to \(1\). Make the
change of variables
\[
  y=a(1-u),
  \qquad
  u=1-\frac ya,
  \qquad
  \dd u=-\frac1a\dd y.
\]
Then
\[
  \int_A^1 u^a f_\pi(u)\dd u
  =
  \frac1a
  \int_0^{a(1-A)}
  \left(1-\frac ya\right)^a
  f_\pi\left(1-\frac ya\right)\dd y.
\]
Multiplying by \(a^\beta/L(a)\), we obtain
\begin{equation}\label{eq:scaled-integral}
  \frac{a^\beta}{L(a)}
  \int_A^1 u^a f_\pi(u)\dd u
  =
  \int_0^{a(1-A)}
  \left(1-\frac ya\right)^a
  y^{\beta-1}
  R_a(y)\dd y,
\end{equation}
where
\[
  R_a(y)
  :=
  \frac{
  f_\pi(1-y/a)
  }{
  (y/a)^{\beta-1}L(a)
  }
  =
  \frac{
  f_\pi(1-y/a)
  }{
  (y/a)^{\beta-1}L(a/y)
  }
  \frac{L(a/y)}{L(a)}.
\]
For each fixed \(y>0\), as \(a\to\infty\), we have \(1-y/a\uparrow1\). Hence,
by \eqref{eq:right-edge},
\[
  \frac{
  f_\pi(1-y/a)
  }{
  (y/a)^{\beta-1}L(a/y)
  }
  \longrightarrow 1.
\]
Also, since \(L\) is slowly varying,
\[
  \frac{L(a/y)}{L(a)}
  \longrightarrow 1.
\]
Therefore,
\[
  R_a(y)\longrightarrow 1,
  \qquad y>0.
\]
Moreover,
\[
  \left(1-\frac ya\right)^a\longrightarrow e^{-y},
  \qquad y>0.
\]
Thus the integrand in \eqref{eq:scaled-integral} converges pointwise to
\[
  e^{-y}y^{\beta-1}.
\]

We now justify dominated convergence. Choose \(0<\varepsilon<\beta\).
By Potter’s bound for slowly varying functions
\cite[Theorem~1.5.6]{BinghamGoldieTeugels1987}, for every \(C_P>1\)
there exists \(x_0=x_0(\varepsilon,C_P)\ge1\) such that
\[
  \frac{L(z)}{L(x)}
  \le
  C_P
  \max\left\{
    \left(\frac{z}{x}\right)^\varepsilon,
    \left(\frac{z}{x}\right)^{-\varepsilon}
  \right\},
  \qquad x,z\ge x_0.
\]
Thus, with \(x=a\) and \(z=a/y\), for all \(a\ge x_0\) and
\(0<y\le a/x_0\),
\begin{equation}\label{ecu_L}
  \frac{L(a/y)}{L(a)}
  \le
  C_P\left(y^\varepsilon+y^{-\varepsilon}\right).
\end{equation}
By \eqref{eq:right-edge}, there exist \(A_f\in(0,1)\) and \(C_f>0\)
such that
\[
  f_\pi(u)
  \le
  C_f(1-u)^{\beta-1}
  L\left(\frac1{1-u}\right),
  \qquad A_f<u<1.
\]
We now take \(A\in(0,1)\) sufficiently close to \(1\) so that
\[
  A>A_f,
  \qquad
  1-A\le x_0^{-1}.
\]
Then, for all \(a\ge x_0\) and \(0<y\le a(1-A)\), both
\(a/y\ge x_0\) and \(1-y/a\ge A>A_f\). Hence
\[
  f_\pi\left(1-\frac ya\right)
  \le
  C_f\left(\frac ya\right)^{\beta-1}L\left(\frac ay\right),
\]
and \eqref{ecu_L} holds on \((0,a(1-A)]\). Therefore
\[
  R_a(y)
  \le
  C_fC_P\left(y^\varepsilon+y^{-\varepsilon}\right),
  \qquad 0<y\le a(1-A).
\]
Since \((1-y/a)^a\le e^{-y}\) for \(0<y<a\), the integrand in
\eqref{eq:scaled-integral}, extended by zero outside \((0,a(1-A)]\), is
bounded by
\[
C e^{-y}
  \left(y^{\beta-1+\varepsilon}+y^{\beta-1-\varepsilon}\right),
\]
which is integrable on \((0,\infty)\). The dominated convergence theorem gives
\[
  \frac{a^\beta}{L(a)}
  \int_A^1 u^a f_\pi(u)\,du
  \longrightarrow
  \int_0^\infty e^{-y}y^{\beta-1}\,dy
  =
  \Gamma(\beta).
\]
Together with the negligibility of the integral over \((0,A)\), this proves
\eqref{eq:abelian-step}.

Now take
\[
  a=k^\gamma.
\]
Since \(\gamma>0\), we have \(a\to\infty\) as \(k\to\infty\). Therefore, by
\eqref{eq:abelian-step},
\[
  G(k)
  =
  \int_0^1 u^{k^\gamma}f_\pi(u)\dd u
  \sim
  \Gamma(\beta)(k^\gamma)^{-\beta}L(k^\gamma).
\]
Equivalently,
\[
  G(k)
  \sim
  \Gamma(\beta)k^{-\gamma\beta}L(k^\gamma).
\]

It remains to record monotonicity and regular variation. Since \(0<u<1\) and
\(k\mapsto k^\gamma\) is increasing,
\[
  u^{(k+1)^\gamma}\le u^{k^\gamma},
  \qquad k\in\Nzero.
\]
Integrating with respect to \(f_\pi(u)\dd u\), we obtain
\[
  G(k+1)\le G(k),
\]
so \(G\) is nonincreasing.

Finally, the function
\[
  k\mapsto k^{-\gamma\beta}L(k^\gamma)
\]
is regularly varying with index \(-\gamma\beta\), because \(L(k^\gamma)\) is
slowly varying. Since \(G(k)\) is asymptotic to a positive constant times this
function, \(G\) is regularly varying with index \(-\gamma\beta\).
\end{proof}

\begin{lemma}\label{lem:basic-identity}
Let \(S=(S_m)_{m\ge0}\) be a nearest-neighbour motion on \(\mathbb Z\), started
from \(S_0=0\), and let \(\Xi\) be an independent lifetime with tail \(G\).
Define
\[
  D^*:=\max_{0\le k\le \Xi}|S_k|.
\]
Let
\[
  \tau_n:=\inf\{m\ge1: |S_m|=n\},
  \qquad n\ge1,
\]
with the convention \(\inf\varnothing=\infty\). Then
\[
  \Pbb(D^*\ge n)=\E[G(\tau_n)].
\]
If
\[
  D^\to:=\max_{0\le k\le\Xi}S_k,
  \qquad
  \tau_n^+:=\inf\{m\ge1:S_m=n\},
\]
then
\[
  \Pbb(D^\to\ge n)=\E[G(\tau_n^+)].
\]
Analogously, if
\[
  D^\leftarrow:=\max_{0\le k\le\Xi}(-S_k),
  \qquad
  \tau_n^-:=\inf\{m\ge1:S_m=-n\},
\]
then
\[
  \Pbb(D^\leftarrow\ge n)=\E[G(\tau_n^-)].
\]
In the last two identities, we again use the convention \(G(\infty)=0\).
\end{lemma}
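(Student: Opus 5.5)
The plan is to prove the identity by conditioning on the walk, using the independence of \(\Xi\) and \(S\).

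\textbf{Step 1: rewrite the event.} We claim
\[
  \{D^*\ge n\}=\{\tau_n\le \Xi\}, \qquad n\ge1.
\]
First suppose \(\tau_n\le\Xi\). Then \(\tau_n<\infty\) and \(|S_{\tau_n}|=n\), so \(D^*\ge n\).

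Conversely, suppose \(D^*\ge n\). Then some \(k\le\Xi\) has \(|S_k|\ge n\). Since \(S_0=0\), the walk is nearest-neighbour and \(n\ge1\), the path \(m\mapsto|S_m|\) moves by exactly one at each step. So it must reach the value \(n\) at some time \(m\) with \(1\le m\le k\). Hence \(\tau_n\le k\le\Xi\).

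When \(\tau_n=\infty\), the event \(\{\tau_n\le\Xi\}\) is empty, because \(\Xi\in\Nzero\) is finite.

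\textbf{Step 2: condition on the walk.} Since \(\Xi\) is independent of \(S\), and \(\tau_n\) is a measurable function of \(S\), we get
\[
  \Pbb(D^*\ge n)=\E\bigl[\Pbb(\Xi\ge \tau_n\mid S)\bigr]=\E[G(\tau_n)].
\]
Here we use \(\Pbb(\Xi\ge t)=G(t)\) for every finite \(t\in\Nzero\). On the event \(\{\tau_n=\infty\}\) we use \(\Pbb(\Xi\ge\infty)=0=G(\infty)\), which is exactly the stated convention. Formally, this step is Fubini applied to the product law of \((S,\Xi)\).

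\textbf{Step 3: the one-sided identities.} These follow by the same argument. If \(\max_{k\le\Xi}S_k\ge n\ge1\), then, because the walk moves by \(\pm1\) and starts at \(0\), it hits \(n\) exactly at some time \(m\) with \(1\le m\le k\). Therefore
\[
  \{D^\to\ge n\}=\{\tau_n^+\le\Xi\}.
\]
The same reasoning applied to \(-S\) gives \(\{D^\leftarrow\ge n\}=\{\tau_n^-\le\Xi\}\). Conditioning on \(S\) as in Step 2 then yields the two one-sided formulas.

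The main point needing care is the discrete intermediate-value argument in Step 1. It requires \(n\ge1\), so that \(\tau_n\), defined with \(m\ge1\), does not miss time \(0\). The treatment of \(\tau=\infty\) is handled by the convention \(G(\infty)=0\) together with the finiteness of \(\Xi\).
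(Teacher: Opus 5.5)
Your proposal is correct and follows the paper's proof essentially step for step. You establish the event identity \(\{D^*\ge n\}=\{\tau_n\le\Xi\}\) by the discrete intermediate-value argument, then condition using the independence of \(\Xi\) and \(S\) together with the convention \(G(\infty)=0\), and handle the one-sided cases in the same way. The only difference is that you condition on the whole path \(S\) rather than on \(\tau_n\), which is inessential.
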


\begin{proof}
We prove the identity for \(D^*\). Since the motion is nearest-neighbour and
starts from \(0\), the event that the path reaches distance at least \(n\)
before time \(\Xi\) is exactly the event that the first hitting time of
\(\{-n,n\}\) is not larger than the lifetime. Thus
\[
\{D^*\ge n\}=\{\tau_n\le \Xi\}.
\]
Indeed, if \(D^*\ge n\), then there exists \(0\le k\le \Xi\) such that
\(|S_k|\ge n\). Since \(S\) is nearest-neighbour and \(S_0=0\), the path
must hit either \(n\) or \(-n\) at some time not larger than \(k\). Hence
\(\tau_n\le k\le \Xi\). Conversely, if \(\tau_n\le \Xi\), then
\(|S_{\tau_n}|=n\), and therefore \(D^*\ge n\).
Using this equivalence and the independence between \(\Xi\) and \(S\),
\[
  \Pbb(D^*\ge n)
  =
  \Pbb(\tau_n\le \Xi)
  =
  \E\left[\Pbb(\Xi\ge \tau_n\mid \tau_n)\right].
\]
If \(\tau_n<\infty\), then
\[
  \Pbb(\Xi\ge \tau_n\mid \tau_n)=G(\tau_n).
\]
If \(\tau_n=\infty\), then \(\{\Xi\ge \tau_n\}\) is empty because
\(\Xi\) is finite-valued, and by convention \(G(\infty)=0\). Therefore,
\[
  \Pbb(D^*\ge n)=\E[G(\tau_n)].
\]
The one-sided identities follow in the same way from
\(\{D^\to\ge n\}=\{\tau_n^+\le \Xi\}\) and
\(\{D^\leftarrow\ge n\}=\{\tau_n^-\le \Xi\}\).
\end{proof}

\section{Biased nearest-neighbour random walks}\label{sec:drift} 

In this section we record hitting-time estimates for a generic copy of the
biased nearest-neighbour random walk specified in Section~\ref{sec:model}. Let
\[
  S_n:=X_1+\cdots+X_n,
  \qquad S_0:=0,
\]
where \[
  \Pbb(X_1=1)=r,
  \qquad
  \Pbb(X_1=-1)=1-r,
\]
with \(r\in(0,1)\setminus\{1/2\}\). Recall that
\[
  \alpha:=\E[X_1]=2r-1,
  \qquad
  v:=|\alpha|=|2r-1|>0.
\]

Thus \(\alpha>0\) when \(r>1/2\), and \(\alpha<0\) when \(r<1/2\).

For \(n\ge1\), define the two-sided hitting time
\[
  \tau_n^*:=\inf\{m\ge1: |S_m|=n\}.
\]
When \(r>1/2\), we shall also use the hitting time of \(n\),
\[
  \tau_n^+:=\inf\{m\ge1:S_m=n\}.
\]
When \(r<1/2\), we shall use the hitting time of \(-n\),
\[
  \tau_n^-:=\inf\{m\ge1:S_m=-n\}.
\]

The first result identifies the order of these hitting times. Since the walk
has nonzero drift, the relevant scale is linear in \(n\). This is a standard
consequence of the strong law of large numbers.

\begin{lemma}\label{lem:drifted-hitting-time}
The following hitting-time asymptotics hold:
\[
  \frac{\tau_n^*}{n}
  \xrightarrow[n\to\infty]{a.s.}
  \frac1v.
\]
Moreover, if \(r>1/2\), then
\[
  \frac{\tau_n^+}{n}
  \xrightarrow[n\to\infty]{a.s.}
  \frac1{2r-1},
\]
whereas, if \(r<1/2\), then
\[
  \frac{\tau_n^-}{n}
  \xrightarrow[n\to\infty]{a.s.}
  \frac1{1-2r}.
\]
\end{lemma}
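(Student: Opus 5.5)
The plan is to derive all three statements from the strong law of large numbers, \(S_m/m\to\alpha\) almost surely, by the standard inversion argument for first-passage times. By symmetry (replacing \(X_i\) by \(-X_i\) swaps \(r\) and \(1-r\) and exchanges \(\tau_n^+\) with \(\tau_n^-\)), it suffices to treat \(r>1/2\), so that \(\alpha=v>0\). The claim for \(\tau_n^-\) when \(r<1/2\) then follows from the claim for \(\tau_n^+\) when \(r>1/2\).

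First I would handle \(\tau_n^+\). Since \(S_m\to+\infty\) almost surely, \(\tau_n^+<\infty\) for all \(n\) almost surely. Because the walk is nearest-neighbour, \(S_{\tau_n^+}=n\) exactly. Moreover \(\tau_n^+\ge n\), so \(\tau_n^+\to\infty\). Evaluating the strong law along the random subsequence \(m=\tau_n^+\) gives
\[
  \frac{n}{\tau_n^+}=\frac{S_{\tau_n^+}}{\tau_n^+}\longrightarrow v
  \qquad\text{almost surely},
\]
and hence \(\tau_n^+/n\to 1/v=1/(2r-1)\). This is legitimate because the strong law holds on a single full-measure event, and on that event any sequence of times tending to infinity works.

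Next comes \(\tau_n^*\), for which I would use a sandwich argument. Trivially \(\tau_n^*\le\tau_n^+\). For the lower bound, \(\tau_n^*\) is finite, and \(|S_{\tau_n^*}|=n\) with \(\tau_n^*\ge n\to\infty\). The strong law then gives
\[
  \frac{|S_{\tau_n^*}|}{\tau_n^*}\to|\alpha|=v,
\]
so \(n/\tau_n^*\to v\) directly. With this observation the sandwich is not even needed, and the same one-line argument covers all three hitting times. The only care needed is the degenerate worry that \(\tau_n^*\) might be infinite or fail to grow. This is dispatched by nearest-neighbour steps, which force \(\tau_n^*\ge n\), together with transience, which gives finiteness; recurrence would also give finiteness for \(\tau_n^*\).

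I expect no real obstacle. The one point to state explicitly is the justification for evaluating the strong law along random times: one works pathwise on the almost sure event \(\{S_m/m\to\alpha\}\), where \(\tau_n\to\infty\) deterministically since \(\tau_n\ge n\).
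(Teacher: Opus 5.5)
Your proof is correct, and it is a more economical version of the paper's argument. Both start from the strong law of large numbers and use reflection for \(r<1/2\). The paper, however, splits each limit into an upper and a lower bound. For the upper bound on \(\tau_n^+\) it evaluates \(S\) at the deterministic times \(\lfloor bn\rfloor\) with \(b>1/\alpha\), which gives \(S_{\lfloor bn\rfloor}>n\) and hence \(\tau_n^+\le\lfloor bn\rfloor\) eventually. The lower bound is a contradiction argument along a subsequence on which \(\tau_n^+\le an\), where \(S_m/m\) is evaluated at \(m=\tau_n^+\). That is exactly your random-time evaluation, but the paper uses it for only one direction.

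For \(\tau_n^*\), the paper gets the upper bound from \(\tau_n^*\le\tau_n^+\). For the lower bound it splits the offending subsequence into indices where the walk exits at \(+n\) and indices where it exits at \(-n\), and rules out the latter using \(S_m/m\ge\alpha-\varepsilon>0\).

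Your observation that \(|S_m|/m\to v\) along the whole sequence, together with \(|S_{\tau_n^*}|=n\) and \(\tau_n^*\ge n\), removes the need for both the two-sided bounds and this case split. It also yields the \(\tau_n^*\) statement for \(r<1/2\) without any reflection. The cost is that you must establish finiteness of every \(\tau_n\) separately, which you correctly do via \(S_m\to+\infty\) on the strong-law event. In the paper, finiteness comes for free from the deterministic-time upper bound.
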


\begin{proof}
It is enough to give the argument for \(r>1/2\), and then use reflection for
the case \(r<1/2\). Assume first that \(r>1/2\), and set \(  \alpha=2r-1>0\). By the strong law of large numbers,
\begin{equation}\label{eq:slln-drift-positive}
  \frac{S_m}{m}\longrightarrow \alpha
  \qquad\text{a.s.}
\end{equation}
We work on an event of probability one on which \eqref{eq:slln-drift-positive}
holds.

We first prove the asymptotic for \(\tau_n^+\). Let \(b>1/\alpha\). Choose
\(\varepsilon>0\) such that \(b(\alpha-\varepsilon)>1\). By
\eqref{eq:slln-drift-positive}, there exists \(m_0=m_0(\varepsilon)\) such that
\[
  S_m\ge (\alpha-\varepsilon)m,
  \qquad m\ge m_0 .
\]
Put \(m_n:=\lfloor bn\rfloor\). For all sufficiently large \(n\),
\(m_n\ge m_0\), and
\[
  \frac{(\alpha-\varepsilon)m_n}{n}
  \longrightarrow
  b(\alpha-\varepsilon)>1.
\]
Hence \(S_{m_n}\ge(\alpha-\varepsilon)m_n>n\) for all sufficiently large \(n\).
Since the walk is nearest-neighbour and starts from \(0\), it must have visited
\(n\) by time \(m_n\). Therefore \(\tau_n^+\le m_n\) eventually, and
\[
  \limsup_{n\to\infty}\frac{\tau_n^+}{n}\le b.
\]
Letting \(b\downarrow1/\alpha\), we obtain
\begin{equation}\label{eq:tau-plus-limsup}
  \limsup_{n\to\infty}\frac{\tau_n^+}{n}
  \le
  \frac1\alpha .
\end{equation}

For the lower bound, let \(a<1/\alpha\). If \(a\le1\), then
\(\tau_n^+\ge n>an\) for every \(n\). Assume therefore that \(a>1\), and choose
\(\varepsilon>0\) such that
\[
  \alpha+\varepsilon<\frac1a.
\]
Again by \eqref{eq:slln-drift-positive}, there exists
\(m_1=m_1(\varepsilon)\) such that
\[
  \frac{S_m}{m}\le \alpha+\varepsilon,
  \qquad m\ge m_1 .
\]
Suppose that, along an infinite subsequence \((n_j^{(1)})_{j\ge1}\),
\[
  \tau_{n_j^{(1)}}^+\le a n_j^{(1)} .
\]
Since the walk is nearest-neighbour, \(\tau_{n_j^{(1)}}^+\ge n_j^{(1)}\), and
therefore \(\tau_{n_j^{(1)}}^+\to\infty\). Thus, after discarding finitely many
indices, \(\tau_{n_j^{(1)}}^+\ge m_1\). For such \(j\),
\[
  S_{\tau_{n_j^{(1)}}^+}=n_j^{(1)},
\]
and hence
\[
  \frac{S_{\tau_{n_j^{(1)}}^+}}{\tau_{n_j^{(1)}}^+}
  =
  \frac{n_j^{(1)}}{\tau_{n_j^{(1)}}^+}
  \ge
  \frac1a
  >
  \alpha+\varepsilon,
\]
which contradicts the definition of \(m_1\). Hence \(\tau_n^+>an\) eventually,
and
\[
  \liminf_{n\to\infty}\frac{\tau_n^+}{n}\ge a.
\]
Letting \(a\uparrow1/\alpha\), we get
\begin{equation}\label{eq:tau-plus-liminf}
  \liminf_{n\to\infty}\frac{\tau_n^+}{n}
  \ge
  \frac1\alpha .
\end{equation}
Combining \eqref{eq:tau-plus-limsup} and \eqref{eq:tau-plus-liminf} gives
\[
  \frac{\tau_n^+}{n}
  \longrightarrow
  \frac1\alpha
  =
  \frac1{2r-1}.
\]

We now prove the corresponding assertion for \(\tau_n^*\). Since
\(\tau_n^*\le\tau_n^+\), the upper bound gives
\[
  \limsup_{n\to\infty}\frac{\tau_n^*}{n}
  \le
  \frac1\alpha.
\]
For the lower bound, let \(a<1/\alpha\). If \(a\le1\), then
\(\tau_n^*\ge n>an\). Assume \(a>1\), and choose \(\varepsilon>0\) such that
\[
  \alpha+\varepsilon<\frac1a,
  \qquad
  \alpha-\varepsilon>0.
\]
By \eqref{eq:slln-drift-positive}, there exists \(m_2=m_2(\varepsilon)\) such
that
\[
  \alpha-\varepsilon
  \le
  \frac{S_m}{m}
  \le
  \alpha+\varepsilon,
  \qquad m\ge m_2 .
\]
Suppose that, along an infinite subsequence \((n_j^{(2)})_{j\ge1}\),
\[
  \tau_{n_j^{(2)}}^*\le a n_j^{(2)} .
\]
Since \(\tau_{n_j^{(2)}}^*\ge n_j^{(2)}\), we have
\(\tau_{n_j^{(2)}}^*\to\infty\). After discarding finitely many terms, assume
\[
  \tau_{n_j^{(2)}}^*\ge m_2,
  \qquad j\ge1.
\]
For each \(j\),
\[
  S_{\tau_{n_j^{(2)}}^*}=n_j^{(2)}
  \qquad\text{or}\qquad
  S_{\tau_{n_j^{(2)}}^*}=-n_j^{(2)}.
\]
Define
\[
  J_+:=\left\{
    j\ge1:
    S_{\tau_{n_j^{(2)}}^*}=n_j^{(2)}
  \right\},
  \qquad
  J_-:=\left\{
    j\ge1:
    S_{\tau_{n_j^{(2)}}^*}=-n_j^{(2)}
  \right\}.
\]
Since \(\mathbb N=J_+\cup J_-\), at least one of \(J_+\) and \(J_-\) is
infinite.

If \(J_+\) is infinite, take a further infinite subsequence
\((n_\ell^{(2,+)})_{\ell\ge1}\). Then
\[
  \frac{S_{\tau_{n_\ell^{(2,+)}}^*}}{\tau_{n_\ell^{(2,+)}}^*}
  =
  \frac{n_\ell^{(2,+)}}{\tau_{n_\ell^{(2,+)}}^*}
  \ge
  \frac1a
  >
  \alpha+\varepsilon,
\]
contradicting the upper bound \(S_m/m\le\alpha+\varepsilon\) for \(m\ge m_2\).
If \(J_-\) is infinite, take a further infinite subsequence
\((n_\ell^{(2,-)})_{\ell\ge1}\). Then
\[
  \frac{S_{\tau_{n_\ell^{(2,-)}}^*}}{\tau_{n_\ell^{(2,-)}}^*}
  =
  -\frac{n_\ell^{(2,-)}}{\tau_{n_\ell^{(2,-)}}^*}
  <0,
\]
contradicting the lower bound \(S_m/m\ge\alpha-\varepsilon>0\) for
\(m\ge m_2\). Both cases are impossible. Therefore \(\tau_n^*>an\)
eventually, and
\[
  \liminf_{n\to\infty}\frac{\tau_n^*}{n}\ge a.
\]
Letting \(a\uparrow1/\alpha\), we obtain
\[
  \liminf_{n\to\infty}\frac{\tau_n^*}{n}
  \ge
  \frac1\alpha.
\]
Together with the upper bound, this proves
\[
  \frac{\tau_n^*}{n}
  \longrightarrow
  \frac1\alpha
  =
  \frac1{2r-1}
  =
  \frac1v .
\]

It remains to treat \(r<1/2\). Define \(\widetilde S_m:=-S_m\). Then
\((\widetilde S_m)_{m\ge0}\) is a nearest-neighbour random walk with
\[
  \mathbb P(\widetilde X_1=1)=1-r>\frac12,
  \qquad
  \mathbb P(\widetilde X_1=-1)=r,
\]
and drift \(1-2r=v>0\). Applying the previous part to
\((\widetilde S_m)_{m\ge0}\) yields
\[
  \frac1n\inf\{m\ge1:\widetilde S_m=n\}
  \longrightarrow
  \frac1{1-2r}.
\]
Since
\[
  \inf\{m\ge1:\widetilde S_m=n\}
  =
  \inf\{m\ge1:S_m=-n\}
  =
  \tau_n^-,
\]
we get
\[
  \frac{\tau_n^-}{n}
  \longrightarrow
  \frac1{1-2r}.
\]
Moreover,
\[
  \inf\{m\ge1:|\widetilde S_m|=n\}
  =
  \inf\{m\ge1:|S_m|=n\}
  =
  \tau_n^*,
\]
and therefore
\[
  \frac{\tau_n^*}{n}
  \longrightarrow
  \frac1{1-2r}
  =
  \frac1v .
\]
\end{proof}

We next show how the linear hitting-time asymptotic transfers to the tail of
the lifetime evaluated at the hitting time.

\begin{lemma}\label{lem:transfer-drift}
Let \(G\) be positive, nonincreasing, and regularly varying at infinity with
index \(-\theta\), where \(\theta>0\). Then
\[
  \frac{\E[G(\tau_n^*)]}{G(n)}
  \longrightarrow
  v^\theta.
\]
If \(r>1/2\), then also
\[
  \frac{\E[G(\tau_n^+)]}{G(n)}
  \longrightarrow
  (2r-1)^\theta.
\]
If \(r<1/2\), then
\[
  \frac{\E[G(\tau_n^-)]}{G(n)}
  \longrightarrow
  (1-2r)^\theta.
\]
\end{lemma}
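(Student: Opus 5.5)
We would prove the three statements by a single argument applied to a generic hitting time \(T_n\in\{\tau_n^*,\tau_n^+,\tau_n^-\}\), with limit constant \(c\in\{1/v,\,1/(2r-1),\,1/(1-2r)\}\) given by Lemma~\ref{lem:drifted-hitting-time}. In each case \(T_n/n\to c\) almost surely, and \(T_n\ge n\) surely because the walk is nearest-neighbour. The goal is
\[
  \frac{\E[G(T_n)]}{G(n)}\longrightarrow c^{-\theta}.
\]
The plan is dominated convergence applied to the ratios \(G(T_n)/G(n)\). In the one-sided cases \(T_n\) may be infinite with positive probability only if the walk drifts away from the target. That does not happen here, since \(\tau_n^+\) is used only when \(r>1/2\), and then \(\tau_n^+<\infty\) almost surely. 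Anyway, \(G(\infty)=0\) is harmless.

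\emph{Pointwise convergence.} Fix an outcome in the almost sure event where \(T_n/n\to c\in(0,\infty)\). Regular variation of \(G\) with index \(-\theta\) gives \(G(\lambda n)/G(n)\to\lambda^{-\theta}\) for fixed \(\lambda\), but here \(\lambda_n=T_n/n\) varies with \(n\). We handle this with the uniform convergence theorem for regularly varying functions \cite{BinghamGoldieTeugels1987}, which gives convergence uniformly for \(\lambda\) in compact subsets of \((0,\infty)\). Alternatively we can avoid it using monotonicity. For \(\delta>0\) and large \(n\), \((c-\delta)n\le T_n\le (c+\delta)n\). Since \(G\) is nonincreasing,
\[
  G(\lceil(c+\delta)n\rceil)\le G(T_n)\le G(\lfloor(c-\delta)n\rfloor).
\]
Dividing by \(G(n)\), the bounds tend to \((c\pm\delta)^{-\theta}\); integer rounding does not affect the limit. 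Letting \(\delta\downarrow0\) gives \(G(T_n)/G(n)\to c^{-\theta}\) almost surely.

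\emph{Domination.} Since \(T_n\ge n\) and \(G\) is nonincreasing, \(G(T_n)\le G(n)\), so \(0\le G(T_n)/G(n)\le 1\). Bounded convergence then yields \(\E[G(T_n)]/G(n)\to c^{-\theta}\). Substituting the three values of \(c\) gives \(v^\theta\), \((2r-1)^\theta\) and \((1-2r)^\theta\).

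The only delicate point is the pointwise step, where the random scaling \(T_n/n\) must be passed through the regularly varying function. The monotone sandwich above handles it cleanly. Domination is trivial precisely because the walk cannot reach distance \(n\) in fewer than \(n\) steps, so no Potter-type bounds or moment estimates on \(\tau_n\) are needed.
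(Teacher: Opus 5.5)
Your proposal is correct and follows essentially the same route as the paper. Both arguments take the almost sure linear hitting-time asymptotics from Lemma~\ref{lem:drifted-hitting-time}, apply the monotone sandwich $G(\lceil(c+\delta)n\rceil)\le G(T_n)\le G(\lfloor(c-\delta)n\rfloor)$ together with regular variation, and then use bounded convergence via $0\le G(T_n)/G(n)\le 1$, which follows from $T_n\ge n$. Your generic $T_n$ is only a more compact packaging of the paper's ``same argument'' for the one-sided cases.
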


\begin{proof}
We prove the result for \(\tau_n^*\). The one-sided statements are proved by
the same argument, using the corresponding part of
Lemma~\ref{lem:drifted-hitting-time}.

By Lemma~\ref{lem:drifted-hitting-time},
\[
  \frac{\tau_n^*}{n}
  \longrightarrow
  \frac1v
  \qquad\text{a.s.}
\]

Let
\[
  c:=\frac1v.
\]
Fix \(0<\delta<c\). Since
\[
  \frac{\tau_n^*}{n}\longrightarrow c
  \qquad\text{a.s.},
\]
for all sufficiently large \(n\),
\[
  n(c-\delta)\le \tau_n^*\le n(c+\delta).
\]
Since \(G\) is nonincreasing,
\[
  G(\lceil n(c+\delta)\rceil)
  \le
  G(\tau_n^*)
  \le
  G(\lfloor n(c-\delta)\rfloor)
\]
for all sufficiently large \(n\). Dividing by \(G(n)\), we get
\[
  \frac{G(\lceil n(c+\delta)\rceil)}{G(n)}
  \le
  \frac{G(\tau_n^*)}{G(n)}
  \le
  \frac{G(\lfloor n(c-\delta)\rfloor)}{G(n)}.
\]
By regular variation of \(G\) with index \(-\theta\),
\[
  \frac{G(\lceil n(c+\delta)\rceil)}{G(n)}
  \longrightarrow
  (c+\delta)^{-\theta}
\]
and
\[
  \frac{G(\lfloor n(c-\delta)\rfloor)}{G(n)}
  \longrightarrow
  (c-\delta)^{-\theta}.
\]
Indeed, for \(k_n=\lceil n(c+\delta)\rceil\) or
\(k_n=\lfloor n(c-\delta)\rfloor\), one has \(k_n/n\to c+\delta\) or
\(k_n/n\to c-\delta\), respectively. Hence regular variation gives
\[
  \frac{G(k_n)}{G(n)}
  =
  \left(\frac{k_n}{n}\right)^{-\theta}
  \frac{\ell(k_n)}{\ell(n)}
  \longrightarrow
  x^{-\theta},
\]
where \(x=c+\delta\) or \(x=c-\delta\), and \(G(n)=n^{-\theta}\ell(n)\) with
\(\ell\) slowly varying.

Consequently,
\[
  (c+\delta)^{-\theta}
  \le
  \liminf_{n\to\infty}\frac{G(\tau_n^*)}{G(n)}
  \le
  \limsup_{n\to\infty}\frac{G(\tau_n^*)}{G(n)}
  \le
  (c-\delta)^{-\theta}.
\]
Letting \(\delta\downarrow0\), we obtain

\[
  \frac{G(\tau_n^*)}{G(n)}
  \longrightarrow
  c^{-\theta}
  =
  v^\theta
  \qquad\text{a.s.}
\]

It remains to pass to expectations. Since the walk is nearest-neighbour and
starts from \(0\), it cannot reach distance \(n\) before time \(n\). Hence
\[
  \tau_n^*\ge n.
\]
Since \(G\) is nonincreasing,
\[
  0\le
  G(\tau_n^*)\le G(n).
\]
Therefore,
\[
  0\le
  \frac{G(\tau_n^*)}{G(n)}
  \le 1.
\]
The dominated convergence theorem gives
\[
  \frac{\E[G(\tau_n^*)]}{G(n)}
  =
  \E\left[\frac{G(\tau_n^*)}{G(n)}\right]
  \longrightarrow
  v^\theta.
\]

If \(r>1/2\), Lemma~\ref{lem:drifted-hitting-time} gives
\[
  \frac{\tau_n^+}{n}
  \longrightarrow
  \frac1{2r-1}
  \qquad\text{a.s.}
\]
Also \(\tau_n^+\ge n\). Repeating the previous argument gives
\[
  \frac{\E[G(\tau_n^+)]}{G(n)}
  \longrightarrow
  (2r-1)^\theta.
\]
If \(r<1/2\), the same argument applied to \(\tau_n^-\) gives
\[
  \frac{\E[G(\tau_n^-)]}{G(n)}
  \longrightarrow
  (1-2r)^\theta.
\]
The proof is complete.
\end{proof}

We now apply the previous transfer lemma to the one-particle displacement
tail.

\begin{proposition}\label{prop:Dstar-drift}
Assume condition \eqref{eq:right-edge}. For the biased nearest-neighbour random walk,
\[
  \Pbb(D^*\ge n)
  \sim
  \Gamma(\beta)v^{\gamma\beta}
  n^{-\gamma\beta}L(n^\gamma),
  \qquad n\to\infty.
\]
If \(r>1/2\), then
\[
  \Pbb(D^\to\ge n)
  \sim
  \Gamma(\beta)(2r-1)^{\gamma\beta}
  n^{-\gamma\beta}L(n^\gamma).
\]
If \(r<1/2\), then
\[
  \Pbb(D^\leftarrow\ge n)
  \sim
  \Gamma(\beta)(1-2r)^{\gamma\beta}
  n^{-\gamma\beta}L(n^\gamma).
\]
\end{proposition}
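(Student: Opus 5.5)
The plan is to chain the three preceding results. Lemma~\ref{lem:basic-identity}, applied to a particle started at the origin, converts each displacement tail into an expectation of the lifetime tail at a hitting time:
\[
  \Pbb(D^*\ge n)=\E[G(\tau_n^*)],\qquad \Pbb(D^\to\ge n)=\E[G(\tau_n^+)],\qquad \Pbb(D^\leftarrow\ge n)=\E[G(\tau_n^-)].
\]
This uses that the walk is independent of the lifetime \(\Xi\). That independence holds because, conditionally on the survival parameter, \(\Xi\) is independent of the motion, and the survival parameter is itself independent of the motion. Hence \(\Xi\) is unconditionally independent of \(S\), with tail \(G(k)=\E[\pi^{k^\gamma}]\). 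The hitting time \(\tau_n\) of Lemma~\ref{lem:basic-identity} coincides with \(\tau_n^*\) of this section.

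Next I verify the hypotheses of Lemma~\ref{lem:transfer-drift} for this \(G\) with \(\theta=\gamma\beta>0\). Lemma~\ref{lem:lifetime-tail} gives that \(G\) is nonincreasing and regularly varying with index \(-\gamma\beta\). Positivity holds because \(G(k)=\int_0^1u^{k^\gamma}f_\pi(u)\,du>0\), since \(f_\pi\) is a density with positive mass near \(1\) by \eqref{eq:right-edge}. Lemma~\ref{lem:transfer-drift} then yields \(\E[G(\tau_n^*)]\sim v^{\gamma\beta}G(n)\). It also yields \(\E[G(\tau_n^+)]\sim(2r-1)^{\gamma\beta}G(n)\) when \(r>1/2\), and \(\E[G(\tau_n^-)]\sim(1-2r)^{\gamma\beta}G(n)\) when \(r<1/2\).

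Finally, I substitute the asymptotic \(G(n)\sim\Gamma(\beta)n^{-\gamma\beta}L(n^\gamma)\) from Lemma~\ref{lem:lifetime-tail}. Asymptotic equivalence is transitive and preserved under multiplication by positive constants, so the three displayed asymptotics follow.

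There is no serious obstacle, since the analytic work is already done in the earlier lemmas. The only point needing care is the bookkeeping in the first step. One must make sure that the lifetime entering Lemma~\ref{lem:basic-identity} is the unconditional mixture with tail \(G\), and that the convention \(G(\infty)=0\) covers the event that the one-sided hitting time is infinite. That event has positive probability only against the drift, which is excluded by the direction choices in the statement.
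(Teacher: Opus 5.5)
Your proposal is correct and follows the paper's proof essentially verbatim: Lemma~\ref{lem:basic-identity} rewrites the displacement tails as \(\E[G(\tau_n)]\), Lemma~\ref{lem:lifetime-tail} shows that \(G\) is nonincreasing and regularly varying with index \(-\gamma\beta\), Lemma~\ref{lem:transfer-drift} is applied with \(\theta=\gamma\beta\), and the asymptotic for \(G(n)\) is then substituted. You also check three points the paper leaves implicit, namely positivity of \(G\), the unconditional independence of \(\Xi\) from the walk, and the role of the convention \(G(\infty)=0\), and you handle each of them correctly.
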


\begin{proof}
By Lemma~\ref{lem:basic-identity},
\[
  \Pbb(D^*\ge n)=\E[G(\tau_n^*)].
\]
By Lemma~\ref{lem:lifetime-tail},
\[
  G(k)
  \sim
  \Gamma(\beta)k^{-\gamma\beta}L(k^\gamma),
  \qquad k\to\infty.
\]
In particular, \(G\) is regularly varying with index
\[
  -\theta=-\gamma\beta,
\]
that is,
\[
  \theta=\gamma\beta.
\]
Applying Lemma~\ref{lem:transfer-drift}, we obtain
\[
  \E[G(\tau_n^*)]
  \sim
  v^{\gamma\beta}G(n).
\]
Using the asymptotic form of \(G(n)\), again from
Lemma~\ref{lem:lifetime-tail},
\[
  G(n)
  \sim
  \Gamma(\beta)n^{-\gamma\beta}L(n^\gamma).
\]
Therefore,
\[
  \Pbb(D^*\ge n)
  =
  \E[G(\tau_n^*)]
  \sim
  \Gamma(\beta)v^{\gamma\beta}
  n^{-\gamma\beta}L(n^\gamma).
\]

If \(r>1/2\), then Lemma~\ref{lem:basic-identity} gives
\[
  \Pbb(D^\to\ge n)=\E[G(\tau_n^+)].
\]
Applying the one-sided part of Lemma~\ref{lem:transfer-drift} with
\(\theta=\gamma\beta\),
\[
  \E[G(\tau_n^+)]
  \sim
  (2r-1)^{\gamma\beta}G(n).
\]
Consequently,
\[
  \Pbb(D^\to\ge n)
  \sim
  \Gamma(\beta)(2r-1)^{\gamma\beta}
  n^{-\gamma\beta}L(n^\gamma).
\]

If \(r<1/2\), the same argument with \(\tau_n^-\) and \(D^\leftarrow\) gives
\[
  \Pbb(D^\leftarrow\ge n)
  \sim
  \Gamma(\beta)(1-2r)^{\gamma\beta}
  n^{-\gamma\beta}L(n^\gamma).
\]
The proof is complete.
\end{proof}

We now state the corresponding off-critical extinction--survival result for the frog model with biased nearest-neighbour random walks.

\begin{theorem}\label{thm:drift-weibull-frog}
Assume \eqref{eq:right-edge}. Consider the frog model on \(\mathbb Z\) in which activated particles perform
independent biased nearest-neighbour random walks with
\[
  \Pbb(X_1=1)=r,
  \qquad
  \Pbb(X_1=-1)=1-r,
\]
where \(r\in(0,1)\setminus\{1/2\}\). Set
\[
  \beta_c:=\frac1\gamma.
\]
Then:
\begin{enumerate}[label=\textup{(\roman*)}]
\item If \(\E(\eta)<\infty\) and \(\beta>\beta_c\), then the frog model dies
out almost surely.
\item If \(\Pbb(\eta=0)<1\) and \(\beta<\beta_c\), then the frog model survives
with positive probability in the direction of the drift, that is, to the right
if \(r>1/2\) and to the left if \(r<1/2\).
\end{enumerate}
\end{theorem}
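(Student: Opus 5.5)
The plan is to combine Proposition~\ref{prop:Dstar-drift} with the comparison criterion, Proposition~\ref{prop:frog-criterion}. The key quantity is $n\Pbb(D^*\ge n)$. By Proposition~\ref{prop:Dstar-drift},
\[
  n\Pbb(D^*\ge n)\sim \Gamma(\beta)v^{\gamma\beta}\,n^{1-\gamma\beta}L(n^\gamma).
\]
The right-hand side is regularly varying in $n$ with index $1-\gamma\beta$, since $n\mapsto L(n^\gamma)$ is slowly varying. The off-critical cases then follow from the standard fact that for slowly varying $\ell$ and $\rho\neq0$ one has $n^{\rho}\ell(n)\to0$ if $\rho<0$ and $n^\rho\ell(n)\to\infty$ if $\rho>0$. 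This fact follows from the Potter bounds already used in Lemma~\ref{lem:lifetime-tail}, or from $\ell(n)=n^{o(1)}$.

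For part (i), assume $\beta>1/\gamma$, so that $1-\gamma\beta<0$. Then $n\Pbb(D^*\ge n)\to0$, and in particular
\[
  \limsup_{n\to\infty} n\Pbb(D^*\ge n)=0<\frac{1}{2\E(\eta)}.
\]
The right-hand side is strictly positive because $\E(\eta)<\infty$; if $\E(\eta)=0$ it should be read as $+\infty$, and in that case the conclusion is trivial since there are no particles. Proposition~\ref{prop:frog-criterion}(i) then gives almost sure extinction.

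For part (ii), assume $\beta<1/\gamma$. Suppose first $r>1/2$. By the one-sided statement of Proposition~\ref{prop:Dstar-drift},
\[
  n\Pbb(D^\to\ge n)\sim\Gamma(\beta)(2r-1)^{\gamma\beta}n^{1-\gamma\beta}L(n^\gamma)\to\infty.
\]
Hence the $\liminf$ is $+\infty$, which exceeds $1/\E(\eta)$ whether $\E(\eta)$ is finite or infinite. Note that $\E(\eta)>0$ because $\Pbb(\eta=0)<1$. Proposition~\ref{prop:frog-criterion}(ii) then yields survival with positive probability. The case $r<1/2$ is symmetric, using $D^\leftarrow$ with constant $(1-2r)^{\gamma\beta}$.

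The main obstacle is justifying that survival holds in the direction of the drift, since Proposition~\ref{prop:frog-criterion}(ii) as stated only asserts survival. I would note that its proof, the percolation comparison in \cite{CarvalhoMachado}, produces survival through an infinite chain of activations in the chosen direction. Since we verify the hypothesis for $D^\to$ when $r>1/2$ (respectively for $D^\leftarrow$ when $r<1/2$), the surviving activation chain moves in the drift direction. Apart from this interpretive point, only the elementary regular-variation limits above are needed.
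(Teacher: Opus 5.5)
Your proposal is correct and follows essentially the same route as the paper. You use Proposition~\ref{prop:Dstar-drift} to get $n\Pbb(D^*\ge n)\sim\Gamma(\beta)v^{\gamma\beta}n^{1-\gamma\beta}L(n^\gamma)$, together with the analogous one-sided asymptotic in the drift direction, and show that this tends to $0$ when $\beta>1/\gamma$ and to $\infty$ when $\beta<1/\gamma$, concluding with Proposition~\ref{prop:frog-criterion}(i) and (ii) respectively. The paper writes $n^{1-\gamma\beta}L(n^\gamma)=(n^\gamma)^{\mp\rho}L(n^\gamma)$ with $\rho=|\gamma\beta-1|/\gamma$ and applies the slowly-varying limit to $L$ itself, whereas you apply it to $n\mapsto L(n^\gamma)$. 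Your remarks on $\E(\eta)=0$ and on reading ``direction of the drift'' off the criterion are consistent with, and slightly more careful than, the paper.
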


\begin{proof}
By Proposition~\ref{prop:Dstar-drift},
\[
  \Pbb(D^*\ge n)
  \sim
  \Gamma(\beta)v^{\gamma\beta}
  n^{-\gamma\beta}L(n^\gamma).
\]
Hence
\[
  n\Pbb(D^*\ge n)
  \sim
  \Gamma(\beta)v^{\gamma\beta}
  n^{1-\gamma\beta}L(n^\gamma).
\]

We first consider the case
\[
  \beta>\frac1\gamma .
\]
Then \(1-\gamma\beta<0\). Since \(L\) is slowly varying and positive,
\[
  x^{-\rho}L(x)\longrightarrow 0
  \qquad\text{for every }\rho>0.
\]
Taking \(x=n^\gamma\) and
\[
  \rho:=\frac{\gamma\beta-1}{\gamma}>0,
\]
we obtain
\[
  n^{1-\gamma\beta}L(n^\gamma)
  =
  (n^\gamma)^{-\rho}L(n^\gamma)
  \longrightarrow 0.
\]
Therefore,
\[
  n\Pbb(D^*\ge n)\longrightarrow0.
\]
By Proposition~\ref{prop:frog-criterion}(i), the frog model dies out almost
surely whenever \(\E(\eta)<\infty\).

We now consider the case
\[
  \beta<\frac1\gamma .
\]
Then \(1-\gamma\beta>0\). Again, since \(L\) is slowly varying and positive,
\[
  x^\rho L(x)\longrightarrow\infty
  \qquad\text{for every }\rho>0.
\]
Taking \(x=n^\gamma\) and
\[
  \rho:=\frac{1-\gamma\beta}{\gamma}>0,
\]
we get
\[
  n^{1-\gamma\beta}L(n^\gamma)
  =
  (n^\gamma)^\rho L(n^\gamma)
  \longrightarrow\infty.
\]

If \(r>1/2\), Proposition~\ref{prop:Dstar-drift} gives
\[
  \Pbb(D^\to\ge n)
  \sim
  \Gamma(\beta)(2r-1)^{\gamma\beta}
  n^{-\gamma\beta}L(n^\gamma).
\]
Consequently,
\[
  n\Pbb(D^\to\ge n)
  \sim
  \Gamma(\beta)(2r-1)^{\gamma\beta}
  n^{1-\gamma\beta}L(n^\gamma)
  \longrightarrow\infty.
\]
By Proposition~\ref{prop:frog-criterion}(ii), the frog model survives with
positive probability in the direction of the drift.

If \(r<1/2\), then Proposition~\ref{prop:Dstar-drift} gives
\[
  \Pbb(D^\leftarrow\ge n)
  \sim
  \Gamma(\beta)(1-2r)^{\gamma\beta}
  n^{-\gamma\beta}L(n^\gamma).
\]
Thus
\[
  n\Pbb(D^\leftarrow\ge n)
  \sim
  \Gamma(\beta)(1-2r)^{\gamma\beta}
  n^{1-\gamma\beta}L(n^\gamma)
  \longrightarrow\infty.
\]
Again, Proposition~\ref{prop:frog-criterion}(ii) gives survival with positive
probability in the direction of the drift.
\end{proof}

\begin{remark}
When \(\gamma=1\), the conditional lifetime is geometric and Theorem~\ref{thm:drift-weibull-frog}
gives the critical value \(\beta_c=1\) for biased nearest-neighbour walks.
\end{remark}

\begin{corollary}\label{cor:critical-drift-weibull}
 Assume \eqref{eq:right-edge} and let \(\beta=1/\gamma\). Set
 \(v:=|2r-1|\).
 \begin{enumerate}[label=\textup{(\roman*)}]
 \item If \(\E(\eta)<\infty\) and
 \[
   \Gamma(1/\gamma)v\,
   \limsup_{n\to\infty} L(n^\gamma)
   <
   \frac{1}{2\E(\eta)},
 \]
 then the frog model dies out almost surely.
 \item If \(\Pbb(\eta=0)<1\) and
 \[
   \Gamma(1/\gamma)v\,
   \liminf_{n\to\infty} L(n^\gamma)
   >
   \frac{1}{\E(\eta)},
 \]
 where \(1/\E(\eta):=0\) when \(\E(\eta)=\infty\), then the frog model survives
 with positive probability in the direction of the drift. More precisely, the
 survival occurs to the right if \(r>1/2\), and to the left if \(r<1/2\).
 \end{enumerate}
 \end{corollary}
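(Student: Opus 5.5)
The plan is to specialize Proposition~\ref{prop:Dstar-drift} to \(\beta=1/\gamma\), so that \(\gamma\beta=1\), and then feed the result into Proposition~\ref{prop:frog-criterion}. With \(\gamma\beta=1\), Proposition~\ref{prop:Dstar-drift} gives
\[
  n\Pbb(D^*\ge n)=\Gamma(1/\gamma)\,v\,L(n^\gamma)\,(1+o(1)),
\]
and, depending on the sign of \(2r-1\),
\[
  n\Pbb(D^\to\ge n)=\Gamma(1/\gamma)(2r-1)L(n^\gamma)(1+o(1))
  \quad\text{or}\quad
  n\Pbb(D^\leftarrow\ge n)=\Gamma(1/\gamma)(1-2r)L(n^\gamma)(1+o(1)).
\]
The factor \(v^{\gamma\beta}\) reduces to \(v\). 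In the direction of the drift the one-sided constant is exactly \(v\), since \(2r-1=v\) when \(r>1/2\) and \(1-2r=v\) when \(r<1/2\).

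For part (i), I would pass to upper limits. Since \(L>0\) and the factor \(1+o(1)\) tends to \(1\),
\[
  \limsup_{n\to\infty} n\Pbb(D^*\ge n)=\Gamma(1/\gamma)\,v\,\limsup_{n\to\infty}L(n^\gamma).
\]
This holds in \([0,\infty]\). The hypothesis makes the right-hand side strictly less than \(1/(2\E(\eta))\), so it is in particular finite. Proposition~\ref{prop:frog-criterion}(i) then gives extinction almost surely.

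For part (ii), I would take \(r>1/2\); the case \(r<1/2\) is symmetric, using \(D^\leftarrow\). Taking lower limits gives
\[
  \liminf_{n\to\infty} n\Pbb(D^\to\ge n)=\Gamma(1/\gamma)\,v\,\liminf_{n\to\infty}L(n^\gamma).
\]
By hypothesis this exceeds \(1/\E(\eta)\), with the convention that \(1/\E(\eta)=0\) if \(\E(\eta)=\infty\). Proposition~\ref{prop:frog-criterion}(ii) then yields survival with positive probability, and it does so via the one-sided displacement in the drift direction. This matches the way the theorem's direction statement was derived in Theorem~\ref{thm:drift-weibull-frog}.

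The only point needing care is exchanging the limsup or liminf with the asymptotic equivalence \(\sim\). If \(a_n\sim c\,b_n\) with \(c>0\) and \(b_n>0\), then \(\limsup a_n=c\limsup b_n\) and \(\liminf a_n=c\liminf b_n\) in \([0,\infty]\). This follows by writing \(a_n=c\,b_n(1+\epsilon_n)\) with \(\epsilon_n\to0\). In the infinite cases the hypotheses still produce the strict inequalities needed: for (i) the hypothesis rules out an infinite limsup, and for (ii) an infinite liminf trivially exceeds \(1/\E(\eta)\).
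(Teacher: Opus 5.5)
Your proposal is correct and follows essentially the same route as the paper: specialize Proposition~\ref{prop:Dstar-drift} to \(\gamma\beta=1\), so the constant becomes \(\Gamma(1/\gamma)v\) both for \(D^*\) and for the one-sided displacement in the drift direction, pass the asymptotic equivalence to \(\limsup\) and \(\liminf\), and apply Proposition~\ref{prop:frog-criterion}. Your explicit justification for exchanging \(\limsup\) and \(\liminf\) with \(\sim\), including the infinite cases, is a step the paper leaves implicit.
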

 \begin{proof}
 Since \(\beta=1/\gamma\), Proposition~\ref{prop:Dstar-drift} gives
 \[
   n\Pbb(D^*\ge n)
   \sim
   \Gamma(1/\gamma)v\,L(n^\gamma).
 \]
 Therefore, if
 \[
   \Gamma(1/\gamma)v\,
   \limsup_{n\to\infty}L(n^\gamma)
   <
   \frac{1}{2\E(\eta)},
\] then \[
   \limsup_{n\to\infty} n\Pbb(D^*\ge n)
   <
   \frac{1}{2\E(\eta)}.
 \]
 Proposition~\ref{prop:frog-criterion}\textup{(i)} implies almost sure
 extinction.
 We now prove the survival statement. Suppose first that \(r>1/2\). Again by
 Proposition~\ref{prop:Dstar-drift},
 \[
   n\Pbb(D^\to\ge n)
   \sim
   \Gamma(1/\gamma)(2r-1)L(n^\gamma)
   =
   \Gamma(1/\gamma)vL(n^\gamma).
 \]
 Hence the assumed lower bound on the \(\liminf\) gives
 \[
   \liminf_{n\to\infty}n\Pbb(D^\to\ge n)
   >
   \frac{1}{\E(\eta)}.
 \]
 Proposition~\ref{prop:frog-criterion}\textup{(ii)} yields survival with
 positive probability to the right.
 If \(r<1/2\), then Proposition~\ref{prop:Dstar-drift} gives
 \[
   n\Pbb(D^\leftarrow\ge n)
   \sim
   \Gamma(1/\gamma)(1-2r)L(n^\gamma)
   =
   \Gamma(1/\gamma)vL(n^\gamma).
 \]

The same lower-bound assumption therefore implies
\[
  \liminf_{n\to\infty}n\Pbb(D^\leftarrow\ge n)
  >
  \frac{1}{\E(\eta)}.
\]
Proposition~\ref{prop:frog-criterion}\textup{(ii)} yields survival with
positive probability to the left.
\end{proof}

\begin{remark}
At the critical value \(\beta=1/\gamma\), Proposition~\ref{prop:Dstar-drift}
gives
\[
  n\Pbb(D^*\ge n)\sim \Gamma(1/\gamma)vL(n^\gamma).
\]
The same asymptotic holds for \(n\Pbb(D^\to\ge n)\) if \(r>1/2\), and for
\(n\Pbb(D^\leftarrow\ge n)\) if \(r<1/2\). Therefore,
Proposition~\ref{prop:frog-criterion} gives extinction under the upper
condition in Corollary~\ref{cor:critical-drift-weibull} and survival under the
lower condition in Corollary~\ref{cor:critical-drift-weibull}. When
\(\E(\eta)<\infty\), the remaining critical region is
\[
  \left\{
  \frac{1}{2\E(\eta)}
  \le
  \Gamma(1/\gamma)v\limsup_{n\to\infty}L(n^\gamma)
  \right\}
  \cap
  \left\{
  \Gamma(1/\gamma)v\liminf_{n\to\infty}L(n^\gamma)
  \le
  \frac{1}{\E(\eta)}
  \right\}.
\]
This region is not decided by Proposition~\ref{prop:frog-criterion}.
\end{remark}

Figure~\ref{fig:diagrama_Weibull} summarizes the extinction--survival regimes
obtained in Theorem~\ref{thm:drift-weibull-frog} and
Corollary~\ref{cor:critical-drift-weibull}. The curve
\(\gamma_c(\beta)=1/\beta\) represents the critical relation
\(\beta=1/\gamma\). In the region \(\gamma<\gamma_c(\beta)\), the model
survives with positive probability in the direction of the drift. In the region
\(\gamma>\gamma_c(\beta)\), the model dies out almost surely whenever
\(\E(\eta)<\infty\). The boundary \(\gamma=\gamma_c(\beta)\) corresponds to the critical regime
covered by Corollary~\ref{cor:critical-drift-weibull}.

\begin{figure}[!ht]
  \centering
  \makebox[\textwidth][c]{%
    \includegraphics[width=1\linewidth,height=0.45\textheight,keepaspectratio=false]{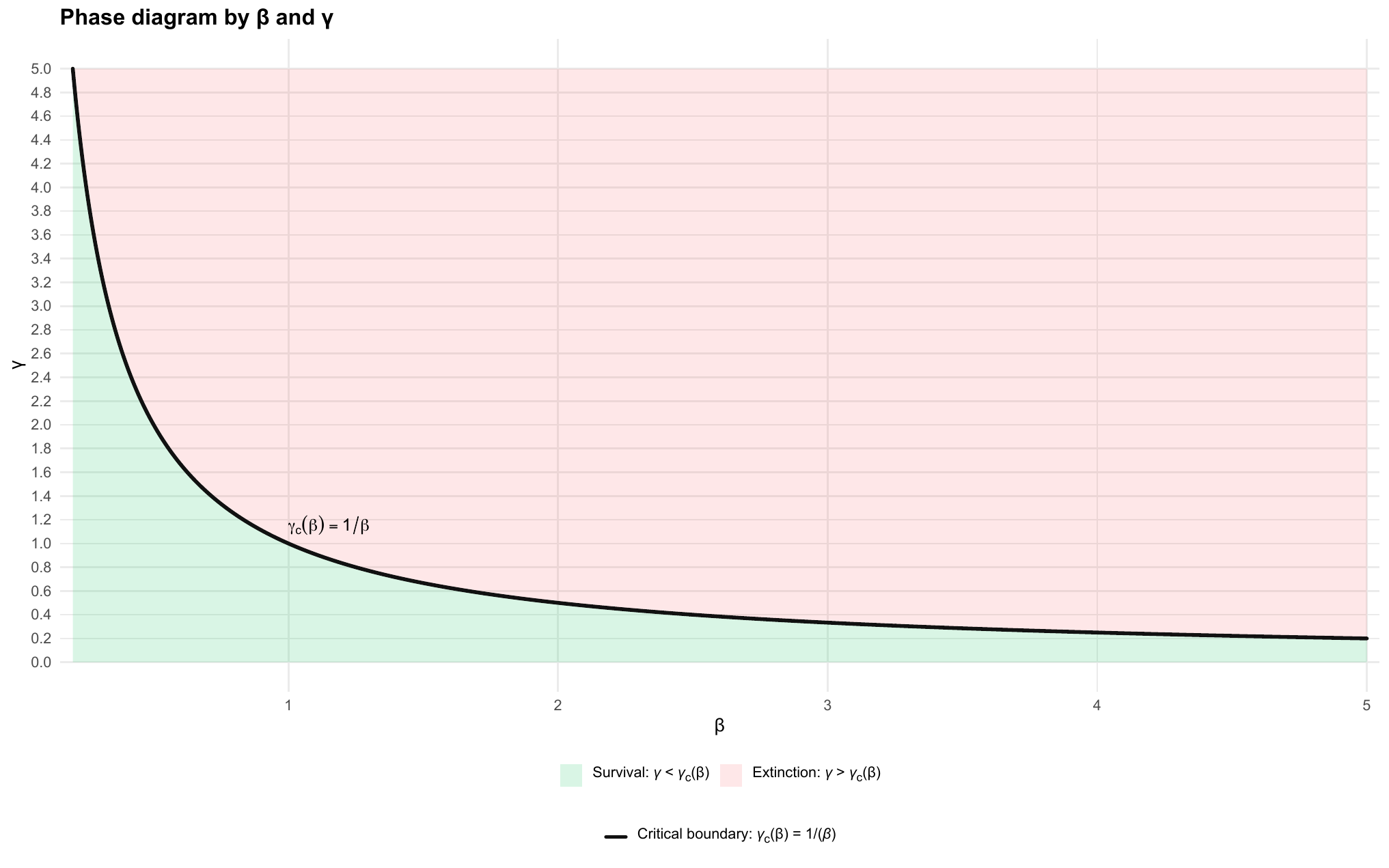}%
  }
  \caption{Phase diagram in the \((\beta,\gamma)\)-plane. The curve
\(\gamma_c(\beta)=1/\beta\) represents the critical boundary. The region
\(\gamma<\gamma_c(\beta)\) corresponds to survival with positive probability in
the direction of the drift, while the region \(\gamma>\gamma_c(\beta)\)
corresponds to almost sure extinction under the condition
\(\E(\eta)<\infty\).}
  \label{fig:diagrama_Weibull}
\end{figure}

\section*{Acknowledgements}
F.P.M. and J.H.R.G. are supported by FAPESP (grants 2023/13453-5 and 2025/03804-0, respectively).

\end{document}